\newcommand{\tL}{\mathtt 1}
\newcommand{\tO}{\mathtt 0}
\theoremstyle{definition}
\newtheorem{theorem}{Theorem}
\newtheorem{Def}{Definition}
\newtheorem{lem}[theorem]{Lemma}
\newtheorem{pb}{Problem}
\author{Pierre Popoli}
\title[Maximum order complexity along polynomials]{On the maximum order complexity of Thue--Morse and Rudin--Shapiro sequences along polynomial values}
\address{
Institut \'Elie Cartan de Lorraine,
Universit\'e de Lorraine,
Vand\oe uvre-l\`es-Nancy, France}
\keywords{Automatic sequences, pseudorandomness, Thue--Morse sequence, Rudin--Shapiro sequence, polynomials}
\subjclass[2010]{11A63, 11B85}
\begin{document}

\begin{abstract}
Both the Thue--Morse and Rudin--Shapiro sequences are not suitable sequences for cryptography since their expansion complexity is small and their correlation measure of order 2 is large. These facts imply that these sequences are highly predictable despite the fact that they have a large maximum order complexity. Sun and Winterhof (2019) showed that the Thue--Morse sequence along squares keeps a large maximum order complexity. Since, by Christol's theorem, the expansion complexity of this rarefied sequence is no longer bounded, this provides a potentially better candidate for cryptographic applications. Similar results are known for the Rudin--Shapiro sequence and more general pattern sequences. In this paper we generalize these results to any polynomial subsequence (instead of squares) and thereby answer an open problem of Sun and Winterhof. We conclude this paper by some open problems.
\end{abstract}

\maketitle

\section{Introduction}

Pseudorandomness, i.e. the study of phenomena related to randomness for deterministic objects, has grown to a large and important subject in number theory and cryptography. In recent years, research focused in particular on automatic sequences (e.g. Thue--Morse sequence, Rudin--Shapiro sequence), i.e. sequences that are generated by a deterministic finite automaton. Such sequences are easy to generate and ``regular'' in some sense, but their behavior changes radically when the sequence is rarefied along a subsequence so that the rarefied sequence shows pseudorandom behavior. The aim of the present article is to study pseudorandomness in the context of such polynomially rarefied automatic sequences. 

To begin with, we first introduce various measures for pseudorandomness and cite the results that are known in the context of classical automatic sequences.

\begin{Def}[Maximum order complexity]
Let $N$ be a positive integer with $N \geq 2$, and $\mathcal{S}=\left(s_n \right)_{n\geq 0}$ be a sequence over $\{0,1\}$ with $(s_0,\ldots,s_{N-2})\neq (a,\ldots,a)$ for $a=0$ or $1$. The  \textit{$N$th maximum order complexity} $M(\mathcal{S},N)$ is the smallest positive integer $M$ such that there is a polynomial $f(x_1,\ldots,x_M)$ with \begin{align*}
s_{i+M}=f(s_i,\ldots,s_{i+M-1}), \quad 0\leq i\leq N-M-1.
\end{align*}
If $s_i=a$ for $i=0,\ldots,N-2$, we define $M(\mathcal{S},N)=0$ if $s_{N-1}=a$ and $M(\mathcal{S},N)=N-1$ else. 
\end{Def}

A sequence with small maximum order complexity cannot be used in cryptography, since the sequence can be constructed from relatively short blocks of consecutive terms. However, a sequence with large maximum order complexity, is not automatically adapted in cryptography. It can still be very predictable as we will state later for the Thue--Morse sequence. 

Diem~\cite{diem2012} introduced the expansion complexity of a sequence as follows.

\begin{Def}[Expansion complexity]
Let $N$ be a positive integer, $\mathcal{S}=\left(s_n \right)_{n\geq 0}$ be a sequence over $\{0,1\}$ and $G(x)$ its generating function defined by \begin{align*}
G(x)=\sum \limits_{i\geq 0}s_ix^i.
\end{align*} The \textit{$N$th expansion complexity} $E(\mathcal{S},N)$ is defined as the least total degree of a nonzero polynomial $h(x,y) \in \mathbb{F}_2[x,y]$ with \begin{align*}
h(x,G(x)) \equiv 0 \pmod{x^N},
\end{align*} 
if $s_0,\ldots,s_{N-1}$ are not all equal to $0$, and $E(\mathcal{S},N)=0$ otherwise. 
\end{Def}

Similarly to the maximum order complexity, a sequence with small $N$th expansion complexity is predictable. By Christol's theorem (see~\cite{christol1979}) automatic sequences over $\mathbb{F}_p$ are characterized by \begin{align*}
\sup_{N\geq 1}E(\mathcal{S},N) < \infty.
\end{align*}
This indicates that automatic sequences are not pseudorandom and may be considered as cryptographically weak. 

Mauduit and S\'{a}rk\"{o}zy~\cite{mauduitsarkozy1997} introduced the correlation measure.
\begin{Def}[Correlation measure of order $2$]
Let $N$ be a positive integer, $\mathcal{S}=(s_n)_{n\geq 0}$ be a sequence over $\{0,1\}$. The \textit{N-th correlation measure of order~2} of $\mathcal{S}$ is \begin{align*}
C_2\left(\mathcal{S},N\right)=\max_{M,d_1,d_2}\left|\sum \limits_{0\leq n \leq M}(-1)^{s_{n+d_1}+s_{n+d_2}}\right|,
\end{align*}
where the maximum is taken over all $M$, $d_1$ and $d_2$ such that $0\leq d_1<d_2$ and $d_2+M<N$. 
\end{Def}

For a random sequence, the correlation measure of order $2$ is of order of $(N\log(N/2))^{1/2}$ (see~\cite{alongkohayakawamauduitmoreira2007}). 

We introduce the symbolic complexity as an other measure of pseudorandomness. 
\begin{Def}[Symbolic complexity]
The \textit{symbolic complexity}, or \textit{subword complexity}, of a sequence $\mathcal{S}$ over $\{0,1\}$ is the function $p_{\mathcal{S}}$ defined for every positive integer $k$ by \begin{align*}
p_{\mathcal{S}}(k)=\text{Card}\{(b_0,\ldots,b_{k-1}) \in \{0,1\}^k : \exists i, u(i)=b_0,\ldots,u(i+k-1)=b_{k-1} \}.
\end{align*}
\end{Def}

A sequence $\mathcal{S}$ over $\{0,1\}$ is \textit{normal} if for every $k\geq 1$ and any $(b_0,\ldots,b_{k-1}) \in \{0,1\}^k$, we have \begin{align*}
\lim_{N \to \infty } \frac{1}{N}\,\text{Card}\{i<N:\; u(i)=b_0,\ldots,u(i+k-1)=b_{k-1} \}=\frac{1}{2^k}.
\end{align*}
For a normal sequence, each block of length $k$ appears and each block appears with the same frequency. A ``good'' pseudorandom sequence should have a large symbolic complexity.

We now look at these complexity measures for the Thue--Morse sequence. One possible definition of this emblematic sequence is as follows.

\begin{Def}[Thue--Morse sequence]
For an integer $n\geq 0$, we write $n=\sum_{i \geq 0}\varepsilon_i2^i$ with $\varepsilon_i \in \{0,1\}$ for all $i$ and $(n)_2=\cdots\varepsilon_1 \varepsilon_0$. The binary sum-of-digits of $n$ equals $s_1(n)=\sum_{ i \geq 0} \varepsilon_i$. The \textit{Thue--Morse sequence} $\mathcal{T}=(t(n))_{n\geq 0}$ is defined by $t(n)=s_1(n) \bmod 2$. 
\end{Def}

Note that the index in the sum-of-digits function relates to the length of the $\tL$-pattern that we consider (the length is 1 here; we will consider $k$ consecutive $\tL$'s for \textit{pattern sequences}). In what follows, we use Vinogradov's notation $f\ll g$ if there is a constant $c>0$ such that $f\leq cg$. 
 
Sun and Winterhof~\cite{sunwinterhof2019} showed that the Thue--Morse sequence has large maximum order complexity, $M(\mathcal{T},N)\gg N$. Mauduit and S\'ark\"ozy~\cite[Theorem 2]{mauduitsarkozy1998} showed that the correlation measure of order 2 of the Thue--Morse sequence is large, $C_2\left(\mathcal{T},N\right)\gg N$. On the other hand, it is well-known that $E(\mathcal{T},N) \leq 5$ for all $N$ since $h(x,y)=(x+1)^3y^2+(x+1)^2y+x$ satisfies $h(x,G(x))=0$, where $G(x)$ is the generating function of $\mathcal{T}$. Also, its symbolic complexity is small, $p_{\mathcal{T}}(k)\ll k$ (see~\cite[Corollary 10.3.2]{alloucheshallit2003} for a general result for all automatic sequences). The small upper bounds of the expansion complexity and of the symbolic complexity imply that the Thue--Morse sequence is far from being a pseudorandom sequence with respect to these measures.

Several of the mentioned results also hold true for more general pattern sequences (also called \textit{Rudin--Shapiro sequences of degree $k$}, see~\cite{mauduitrivat2018}). For the sake of shortness, we refer to them as \textit{pattern sequences}, as they were called by Sun and Winterhof~\cite{sunwinterhof2019}.

\begin{Def}[Pattern sequences]
Let $k\geq1$. Denote by $P_k=\tL \cdots \tL \in \mathbb{F}_2^k$ the all $\tL$-pattern of length $k$ and by $s_k(n)$ the number of occurrences of $P_k$ in the binary digital representation of $n$. The $k$-\textit{pattern sequence} $\mathcal{P}_k=(p_k(n))_{n\geq 0}$ (or, for short, \textit{pattern sequence}) is defined by 
\begin{align*}
  p_k(n)= s_k(n) \bmod 2.
\end{align*}
\end{Def}

For $k=1$ we get the Thue--Morse sequence $\mathcal{T}=\mathcal{P}_1$ and for $k=2$ we get the Rudin--Shapiro (or Golay--Rudin--Shapiro) sequence $\mathcal{R}=\mathcal{P}_2=(r(n))_n$. As the Thue--Morse sequence, the pattern sequence $\mathcal{P}_k$ is $2$-automatic and has a large maximum order complexity (see~\cite[Theorem 2]{sunwinterhof2019}). Its expansion complexity satisfies $E(\mathcal{P}_k,N)\leq 2^k+3$ for $N\geq 1$ since $h(x,y)=(x+1)^{2^{k+1}+1}y^2+(x+1)^{2^{k}}y+x^{2^{k}-1}$ satisfies $h(x,G(x))=0$ with $G(x)$ the generating function of $\mathcal{P}_k$ (see~\cite{sunwinterhof2019bis}). M\'erai and Winterhof~\cite[Corollary 4]{meraiwinterhof2018} showed that the correlation of order $2$ for pattern sequences is still large. However, since pattern sequences are still automatic, their symbolic complexity is linear~\cite[Corollary 10.3.2]{alloucheshallit2003}. Therefore, the pattern sequences are not pseudorandom with respect to each of the defined measures.

The behavior of these sequences regarding the defined pseudorandomness measures changes when these sequences are rarefied along specific subsequences. Sun and Winterhof~\cite{sunwinterhof2019bis} showed that the maximum order complexity of the Thue--Morse sequence and pattern sequences along squares remains large. Note that the largest possible order of magnitude of $M(\mathcal{S},N)$ is $N$, while the expected value of $M(\mathcal{S},N)$ is $\log N$ (see~\cite{sunwinterhof2019bis}).

\begin{theorem}[\cite{sunwinterhof2019bis}, Theorem 1]\label{TheoremSunWinterhof1}
Let $\mathcal{T}'=(t(n^2))_n$ be the subsequence of the Thue--Morse sequence along squares. Then the $N$th maximum order complexity of $\mathcal{T}'$ satisfies \begin{align*}
M(\mathcal{T}',N) \geq \sqrt{\frac{2N}{5}}\, , \qquad N\geq 21.
\end{align*}
\end{theorem}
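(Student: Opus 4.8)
The plan is to use the standard collision principle for maximum order complexity: if a block of length $M$ occurs at two positions $i<j$ of $\mathcal{T}'$ with $j+M\le N-1$ and with different successors, then no feedback polynomial of order $M$ can reproduce $\mathcal{T}'$ on its first $N$ terms, whence $M(\mathcal{T}',N)\ge M+1$. Concretely, writing $s_n=t(n^2)$, I would look for indices $i<j$ and a length $M$ with
\begin{align*}
s_{i+\ell}=s_{j+\ell}\quad(0\le \ell\le M-1),\qquad s_{i+M}\neq s_{j+M},
\end{align*}
subject to $j+M\le N-1$, and then optimise $M$ against $N$. So everything reduces to producing one long repeated block of $\mathcal{T}'$ with a different continuation, using indices that are not too large.

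The engine is a sum-of-digits computation for squares of integers close to a power of $2$. For $n=2^k+r$ one has $n^2=2^{2k}+2^{k+1}r+r^2$, and a short check of bit ranges shows that for $k\ge 4$ and $0\le r<2^{(k+1)/2}$ the three summands occupy disjoint blocks of binary digits, so that $s_1(n^2)=1+s_1(r)+s_1(r^2)$ and hence
\begin{align*}
t\bigl((2^k+r)^2\bigr)\equiv 1+t(r)+t(r^2)\pmod 2 .
\end{align*}
The crucial feature is that the right-hand side does not depend on $k$. I would then compare the two windows of $\mathcal{T}'$ starting at $i=2^k$ and $j=2^{k+1}$: as long as $r$ stays below the smaller ``no-carry'' threshold $T=T(k):=\min\{r:\ r^2\ge 2^{k+1}\}$, both windows equal $1+t(r)+t(r^2)$ and therefore agree.

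The break is located by pushing $r$ to the threshold, where a carry appears for $n=2^k+r$ but not yet for $n=2^{k+1}+r$. Writing $r^2=2^{k+1}+\rho$ with $0\le\rho<2^{k+1}$ one gets, again by a disjoint-digits argument, $t((2^k+r)^2)\equiv 1+t(r+1)+t(\rho)$, while the companion value at $2^{k+1}$ is still $1+t(r)+t(r^2)\equiv t(r)+t(\rho)$; subtracting, the two windows differ at position $r$ precisely when $t(r)=t(r+1)$. Thus the first disagreement occurs at $M:=T'=\min\{r\ge T:\ t(r)=t(r+1)\}$, and since the Thue--Morse sequence is overlap-free it cannot avoid equal adjacent pairs for long, so $T'-T=O(1)$ and $M\ge T\ge 2^{(k+1)/2}$. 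This yields $M(\mathcal{T}',N)\ge M+1$ whenever $2^{k+1}+M\le N-1$.

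Finally I would choose $k$ as large as possible subject to $2^{k+1}+T'(k)\le N-1$; maximality forces $N$ to lie within a bounded factor of $2^{k+2}$, so $2^{k+1}\gtrsim N/2$ and $M+1\ge 2^{(k+1)/2}+1\ge\sqrt{2N/5}$ once $N$ is large enough to absorb the lower-order terms, which is where the explicit threshold $N\ge 21$ comes from. I expect the main obstacle to be exactly this last bookkeeping together with the carry computation in the previous step: one must pin down the break position through the adjacency condition $t(r)=t(r+1)$, bound $T'-T$ explicitly via overlap-freeness, and then carry the rounding and lower-order corrections carefully enough to reach the stated constant $\sqrt{2/5}$ rather than a slightly worse one.
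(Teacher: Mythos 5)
Your overall strategy is the same as the one underlying this theorem in the paper (which does not reprove the cited result but establishes the generalization \Cref{MainTheorem1} by the identical method, via \Cref{MainLemmaThueMorse} with $d=2$): the collision principle plus the non-interfering binary decomposition $(2^k+r)^2=2^{2k}+2^{k+1}r+r^2$, which is part (i) of that lemma. Where you genuinely diverge is in certifying the disagreement. The paper fixes a single special offset $n=1+y2^{l}$ and a shift chosen from $z=\sum i\alpha_i$ so that the interfering coefficients $y^d+z$ and $y^d+2^rz$ have different Thue--Morse values; you instead let the offset run up to the natural carry threshold $r^2\ge 2^{k+1}$ and invoke overlap-freeness (neither $\tO\tL\tO\tL\tO$ nor $\tL\tO\tL\tO\tL$ is a factor of $\mathcal{T}$, so $t(r)=t(r+1)$ occurs within any four consecutive values of $r$) to locate the first break. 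Your two digit computations, $t((2^k+r)^2)\equiv 1+t(r+1)+t(\rho)$ and $t((2^{k+1}+r)^2)\equiv t(r)+t(\rho)$, check out, and asymptotically your route even gives the better constant $\sqrt{1/2}$ in place of $\sqrt{2/5}$.

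There is, however, one concrete gap beyond the bookkeeping you already flag. Your ``disjoint blocks'' claim at the threshold needs the additional constraint $r+1<2^{k-1}$; otherwise $2^{k+1}(r+1)$ collides with the leading term $2^{2k}$ and the formula fails. This is not hypothetical: for $k=4$ the break should occur at $r=T'=7$, but $2^{5}\cdot 8=2^{8}=2^{2k}$, so $(2^4+7)^2=529=2^9+2^4+2^0$ has $t=1$ while your formula predicts $0$; worse, $t(23^2)=t(39^2)=1$, so the two windows actually \emph{agree} there and the construction produces no contradiction at $k=4$. Since $T'\approx\sqrt{2}\cdot 2^{k/2}$, the constraint forces $k\ge 5$, hence your argument only applies once $N\ge 2^{k+1}+T'(k)+1\ge 74$. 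The range $21\le N\le 73$, and more generally the exact constant $\sqrt{2/5}$ with threshold $N\ge 21$, would therefore require either a sharper analysis for small $k$ or direct verification; as written, your sketch proves $M(\mathcal{T}',N)\gg\sqrt{N}$ (which is all the paper itself claims in its generalization) but not the explicit statement.
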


\begin{theorem}[\cite{sunwinterhof2019bis}, Theorem 2]
For $k\geq 2$ let $\mathcal{P}_k'=(p_k(n^2))_n$ be the subsequence of  $\mathcal{P}_k$ along squares. Then the $N$th maximum order complexity of $\mathcal{P}_k'$ satisfies \begin{align*}
M(\mathcal{P}_k',N) \geq \sqrt{\frac{N}{8}}, \qquad N\geq 2^{2k+2}.
\end{align*}
\end{theorem}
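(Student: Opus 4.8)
The plan is to use the standard "collision with diverging continuation'' principle for maximum order complexity: if $M=M(\mathcal{P}_k',N)$, then any two length-$M$ windows of $\mathcal{P}_k'$ occurring inside $[0,N)$ that coincide must have equal immediate successors; exhibiting a single matching pair whose successors differ forces $M$ to exceed the window length employed. The whole difficulty is therefore to engineer, inside the first $N$ terms, one such matching pair of windows of length as large as $\sqrt{N/8}$.

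First I would prove the arithmetic lemma that controls $p_k$ on squares of the shape $n=2^a+v$. Writing $(2^a+v)^2=2^{2a}+2^{a+1}v+v^2$ and restricting to $0\le v<2^{a/2}$, the three summands occupy pairwise disjoint bit-blocks separated by at least one $\tO$: the bit in position $a$ is free since $v^2<2^a$, and $2^{a+1}v<2^{2a}$. A run of $k$ consecutive $\tL$'s cannot straddle a $\tO$, so no $k$-pattern crosses a block boundary, while the top bit $2^{2a}$ is isolated and contributes nothing for $k\ge 2$. Hence $s_k((2^a+v)^2)=s_k(v)+s_k(v^2)$, and therefore
\[
p_k\bigl((2^a+v)^2\bigr)\equiv p_k(v)+p_k(v^2)\pmod 2=:u_v,
\]
a quantity \emph{independent of the scale $a$}. (For $k=1$ the isolated bit $2^{2a}$ adds a constant $1$, so the Thue--Morse case of \Cref{TheoremSunWinterhof1} runs in perfect parallel.)

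The decisive consequence is this scale-invariance in the separated regime, which I would exploit by comparing scales $a$ and $a+1$. Let $T=2^{a/2}$ be the separation threshold. I take the length-$M$ window of $\mathcal{P}_k'$ at offsets $T-M,\dots,T-1$ from the base point $2^a$, and the window at the same offsets from $2^{a+1}$. For the larger scale these offsets, \emph{and} the next offset $T$, still lie in the separated range, so both windows read $u_{T-M},\dots,u_{T-1}$, and the recurrence $f$ is forced to output the genuine next value $u_T$. Feeding the same $f$ the identical window at scale $a$ then predicts $p_k((2^a+T)^2)=u_T$ as well. The witness, and the main obstacle, is to show this prediction is false: for an offset near $T=2^{a/2}$ the block $v^2$ first touches the middle block $2^{a+1}v$, merging two runs of $\tL$'s and altering the number of $k$-patterns, so that the true value $p_k((2^a+v)^2)$ differs from $u_v$. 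The delicate point is forcing this change to be \emph{odd} uniformly in $k\ge 2$: a single merge shifts the pattern count by $k-1$, which is odd only for even $k$, so for odd $k$ I expect to displace the boundary offset slightly (inserting one additional, suitably placed $\tL$ so the net change becomes odd), while checking that all preceding offsets remain separated.

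Finally I would optimize the free scale $a$. All indices used are at most $2^{a+1}+T$, so the construction sits inside $[0,N)$ as soon as $2^{a+1}+T\le N$; since $T^2=2^a$ this is essentially $2T^2+T\le N$. The argument shows that $M\le T$ is impossible, hence $M>T$, and choosing the largest admissible even $a$ makes $2^{a+1}$ lie in an interval of the form $[N/4,N]$, whence $T=\sqrt{2^{a+1}/2}\ge\sqrt{N/8}$ and therefore $M(\mathcal{P}_k',N)>T\ge\sqrt{N/8}$; the discretisation to powers of two is exactly what degrades the constant from $\tfrac12$ to $\tfrac18$. The constraint that the blocks be long enough to carry a $k$-pattern forces $a\gtrsim 2k$, i.e. $2^{a+1}\ge 2^{2k}$, which is precisely what $N\ge 2^{2k+2}$ guarantees. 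Pinning down these inequalities alongside the parity-flip count is where both $\tfrac18$ and the threshold $2^{2k+2}$ are fixed.
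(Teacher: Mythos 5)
Your overall architecture is the right one (non-interfering digit splitting, scale invariance of $p_k((2^a+v)^2)$ for small $v$, a collision of windows with diverging continuation, and the bookkeeping that turns $T=2^{a/2}$ into $\sqrt{N/8}$ and forces $N\ge 2^{2k+2}$), but the heart of the proof --- the divergence witness --- is missing, and the place where you look for it does not work. At the offset $v=T=2^{a/2}$ you have $(2^a+T)^2=2^{2a}+2^{3a/2+1}+2^a$, three isolated bits, so $p_k((2^a+T)^2)=0=u_T$ for every $k\ge 2$, and the same at scale $a+1$: there is no divergence at the successor of your window. Genuine interference between $v^2$ and the cross term $2^{a+1}v$ only begins once $v^2\ge 2^{a+1}$, i.e.\ $v\gtrsim \sqrt{2}\,T$, which lies \emph{outside} the range $[0,T)$ in which your windows are guaranteed to match at the smaller scale. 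So the matched-window region and the interference region do not meet, and a ``single immediate successor'' argument cannot reach the witness. The standard repair is to iterate the recurrence: matching length-$M$ windows at $2^a$ and $2^{a+1}$ propagate equality of $p_k((2^a+v)^2)$ and $p_k((2^{a+1}+v)^2)$ for \emph{all} admissible $v$ up to $N-2^{a+1}-1$, which is exactly how the proof of \Cref{MainTheorem1} in this paper reaches a far-away divergence point; your write-up omits this propagation step.

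Even with that fixed, the witness itself is only gestured at. Your heuristic that ``a single merge shifts the pattern count by $k-1$'' treats the interaction of $v^2$ with $2^{a+1}v$ as a concatenation of two $\tL$-runs, but the interaction is an \emph{addition} with carry propagation, and the resulting change in $s_k$ is not a clean $k-1$; moreover $k-1$ is odd only for even $k$, and your proposed fix for odd $k$ (``insert one additional, suitably placed $\tL$'') is not constructed, nor is it checked that the preceding offsets stay separated after the modification. This parity control is precisely the technical core of the known proofs: in \Cref{MainLemmaPattern} the divergence is engineered by reducing to $p_k(y^d+z)$ versus $p_k(y^d+2^sz)$ and choosing $y=f_a(2^u)$ with $f_a(x)=ax^3+ax^2-x+a$, so that $y^d$ contains an adjustable-length block of $\tL$-bits and the number of $k$-patterns cancelled by the carry can be given either parity by varying $u$. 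Without an explicit analogue of that construction, your argument establishes the scale-invariance lemma and the counting, but not the existence of the single index at which the two subsequences disagree --- which is the whole theorem.
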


Drmota, Mauduit and Rivat~\cite{drmotamauduitrivat2019} showed that $\mathcal{T}'$ is a normal sequence, and M\" ullner~\cite{mullner2018} showed that $\mathcal{R}'$ and more general pattern sequences along squares are normal, too. These statements mean that $\mathcal{T'}$ and $\mathcal{R'}$ might be better candidates for cryptographic applications as the inherent weaknesses for automatic sequences disappear. 

By~\cite[Theorem 6.10.1]{alloucheshallit2003}, $\mathcal{T}'=\mathcal{P}_1'$ is no longer automatic and so \begin{align*}
\sup_{N\geq 1}E(\mathcal{T}',N)=+\infty.
\end{align*}
By \cite{mullner2018} and Christol's theorem we also have for $\mathcal{R}'=\mathcal{P}_2'$,
 \begin{align*}
\sup_{N\geq 1}E(\mathcal{R}',N)=+\infty.
\end{align*}
Passing from the subsequence of squares to more general polynomials seems to be a natural question. More specifically, Sun and Winterhof (Problem 4 in~\cite{sunwinterhof2019bis}) posed the problem to extend their results to this more general context.

In this paper we provide an answer to their problem. 
\begin{theorem}\label{MainTheorem1}
Let $d \geq 2$ and $P(X) \in \mathbb{Z}[X]$ be a monic polynomial of degree $d$ with $P(\mathbb{N})\subset \mathbb{N}$. Let $\mathcal{T}_P=(t(P(n)))_n$ be the subsequence of the Thue--Morse sequence along the polynomial subsequence $(P(n))_n$. Then $\mathcal{T}_P$ satisfies \begin{align*}
M(\mathcal{T}_P,N) \gg N^{1/d},
\end{align*}
where the implied constant only depends on $P$. 
\end{theorem}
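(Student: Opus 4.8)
The plan is to bound the maximum order complexity below by exhibiting a long block of consecutive values of $\mathcal{T}_P$ that cannot be reproduced by any recurrence of small order, which by definition forces $M(\mathcal{T}_P,N)$ to be large. Let me think about how to carry this out.

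Let me recall the structure of such arguments. The key idea in Sun–Winterhof is: if $M(\mathcal{T}',N) = M$, then there is a function $f$ such that $t((n+M)^2)$ is determined by $t(n^2),\ldots,t((n+M-1)^2)$ for all relevant $n$. To get a contradiction from small $M$, one finds two indices $n_1 < n_2$ where the preceding $M$ values agree but the value at the shifted point differs. The standard mechanism: choosing $n$ so that $P(n)$ lands in a range where a high binary digit can be toggled independently, changing the parity of the sum-of-digits (hence $t(P(n))$) while leaving nearby values $t(P(n+j))$, $1 \le j \le M-1$, unaffected.

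Let me think about the specifics. We want to pick a large power of two, say $2^\lambda$, and consider $n$ near a point where $P(n) \approx 2^\lambda$. Since $P$ is monic of degree $d$, for $n$ in a window of length $M$, the values $P(n), P(n+1), \ldots, P(n+M-1)$ span a range of size roughly $M \cdot P'(n) \approx M \cdot d n^{d-1}$. We want to find two starting points $n$ and $n'$ such that the $M$ values of $t$ preceding the target agree, but the target values differ. The cleanest approach: show that for suitable $n$, one can flip a single high-order binary digit of $P(n)$ — adding $2^\lambda$ with $\lambda$ large enough that $2^\lambda$ exceeds all the relevant $P$-values in the window — thereby flipping $t(P(n))$ without touching $t(P(n+j))$ for the other window positions, which would then live on the "low side."

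So here is the refined plan. First, fix the correspondence between $N$ and the scale: take $n$ up to roughly $N^{1/d}$ so that $P(n) < N$, ensuring all values used lie within the first $N$ terms (this is where the exponent $1/d$ enters). Suppose for contradiction that $M(\mathcal{T}_P, N) = M$ is small, say $M < c N^{1/d}$. First I would establish a key digit-independence lemma: for infinitely many (or sufficiently many) $n$ of size $\asymp N^{1/d}$, the top digit of $P(n)$ in base $2$, at some position $\lambda$, can be toggled by replacing $n$ with a nearby $n'$ (or by directly comparing $P$ values), while the lower-order digit pattern that governs $t(P(n+j))$ for $1 \le j \le M-1$ is preserved. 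The cleanest version uses the monic leading term: $P(n) = n^d + (\text{lower order})$, and for $n = 2^m$ one has $P(2^m) = 2^{md} + \cdots$ with a clean top digit.

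The main obstacle, and the real content, will be the interaction between the carries in $P(n+j)$ and the attempt to isolate a single flippable high digit — that is, proving that perturbing the argument (or a high digit of the argument) changes $t(P(n))$ but leaves $t(P(n+j))$, $1\le j \le M-1$, exactly fixed. Here I would exploit that $P$ is monic: the leading behavior $P(n)=n^d+O(n^{d-1})$ means that for $n$ ranging over a short window the values $P(n+j)$ differ only in their low-order $\asymp d\log_2 n$ bits relative to the dominant block, so a sufficiently high bit (position $\lambda \approx d\log_2 n + \omega(\log M)$) sits above all the digit-activity in the window and can be flipped freely. Concretely I would write $t(P(n+j)) = t(A_j)\oplus t(B_j)$ where $A_j$ is a high block and $B_j$ a low block, using additivity of $s_1$ under digit-disjoint decomposition, and arrange the high block to be constant across $j=1,\ldots,M-1$ but to change parity for $j=0$. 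If the recurrence $f$ existed, the agreeing inputs $t(P(n+j))$, $1\le j\le M-1$, would force equal outputs, contradicting the engineered parity flip at $j=0$. Quantifying how large $n$ (hence $N$) must be so that such a flippable configuration exists within a window of length $M$, and checking the window stays inside $[0,N)$, yields the inequality $M \gg N^{1/d}$; the final step is merely to collect these constraints and read off the implied constant's dependence on $\deg P$ and the coefficients of $P$.
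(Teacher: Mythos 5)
Your high-level framework is the right one and coincides with the paper's: assume $M$ is small, find two starting indices $k_1<k_2$ whose first $M$ values of the subsequence agree so that the recurrence forces agreement at all later corresponding positions, then exhibit one later position where the values actually differ; working at argument scale $n\asymp N^{1/d}$ yields $M\gg N^{1/d}$. The gap is in the only technically substantive step, namely the mechanism producing the single disagreement. You propose to ``flip a single high-order binary digit of $P(n)$'' at a position sitting above all digit activity in the window. But one can only perturb the argument $n$, not the value $P(n)$, and $P$ is nonlinear: replacing $n$ by $n+2^{\lambda}$ with $2^{\lambda}$ huge gives $P(n+2^{\lambda})=\sum_{i=0}^{d}\gamma_i(n)\,2^{i\lambda}$ with $\gamma_i(n)=\sum_{j\geq i}\binom{j}{i}\alpha_j n^{j-i}$, i.e.\ $d$ additional digit blocks whose digit sums depend on $n$ --- not a single flipped bit. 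When these blocks are digit-disjoint one gets $t(P(n+2^{\lambda}))=t(P(n))+\sum_{i\geq1}t(\gamma_i(n))$, and on comparing the two shifts $2^{dl}$ and $2^{dl+r}$ the extra contributions are \emph{identical} (since $t(2^{\mu}m)=t(m)$). So the ``clean high bit'' regime produces only agreement --- it is exactly part (i) of the paper's key lemma --- and can never create the asymmetry you need at a single index; if the flip really were free and uniform it would flip every value in the window, not just one.

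The paper obtains the disagreement by deliberately leaving the non-interfering regime at one special argument: taking $n=1+y2^{l}$, the top of $\gamma_0=(1+y2^l)^d+\cdots$ (namely $y^d2^{dl}$) collides with the bottom of $\gamma_1 2^{dl}$ (namely $z=\sum_{i}i\alpha_i$), so the two quantities to compare reduce to $t(y^d+z)$ versus $t(y^d+2^rz)$; choosing $y=2^{\lambda}$ with $2^{\lambda}\leq z<2^{\lambda+1}$ and $r=(d-1)\lambda$ makes a carry occur in exactly one of the two sums, giving the parity flip. Your sketch contains no analogue of this carry-engineering step, and without it (or a substitute) the argument does not close. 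Two smaller omissions: a preliminary translation of the variable is needed to make all coefficients of $P$ nonnegative so that the blocks $\gamma_i(n)$ can be bounded and kept digit-disjoint, and the disagreeing index must be shown to lie inside $[0,N)$ at a position reachable from the agreeing windows, which is why the paper takes $k_1=2^{dl}$, $k_2=2^{dl+r}$ and the special index $1+y2^{l}$ with $y,r$ depending only on $P$.
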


We recover the same bound for pattern sequences, too.
\begin{theorem}\label{MainTheorem2}
Let $d \geq 2$ and $P(X) \in \mathbb{Z}[X]$ be a monic polynomial of degree $d$ with $P(\mathbb{N})\subset \mathbb{N}$. Let $\mathcal{P}_{k,P}=(p_k(P(n)))_n$ be the subsequence of the pattern sequence along the polynomial subsequence $(P(n))_n$. Then $\mathcal{P}_{k,P}$ satisfies \begin{align*}
M(\mathcal{P}_{k,P},N) \gg N^{1/d},
\end{align*}
where the implied constant only depends on $P$ and $k$. 
\end{theorem}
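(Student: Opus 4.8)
The plan is to prove \Cref{MainTheorem2} by the same distinguishing-pair strategy that underlies \Cref{MainTheorem1} for the Thue--Morse case $k=1$, the extra difficulty being the bookkeeping of occurrences of the all-$\tL$ pattern $P_k$ when $k\ge 2$. First I would reformulate the desired lower bound as a purely combinatorial statement about conflicting windows. Writing $\mathcal S=\mathcal P_{k,P}$, the Definition of maximum order complexity shows that $M(\mathcal S,N)>L$ as soon as one can exhibit two indices $i_1\neq i_2$ in $[0,N-L-1]$ with
\[ p_k(P(i_1+\ell))=p_k(P(i_2+\ell))\quad(0\le \ell\le L-1),\qquad p_k(P(i_1+L))\neq p_k(P(i_2+L)), \]
since any feedback polynomial of memory $L$ would then be forced to take two different values on the same input. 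Hence it suffices to produce such a conflicting pair with window length $L\asymp N^{1/d}$.

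The structural tool I would isolate first is a block-additivity lemma for the pattern-counting function $s_k$: if $m=a\,2^{\lambda}+b$ with $0\le b<2^{\lambda}$ and if the top $k-1$ bits of $b$ (or the bottom $k-1$ bits of $a$) vanish, then no occurrence of $P_k=\tL\cdots\tL$ can straddle the boundary at position $\lambda$, so $s_k(m)=s_k(a)+s_k(b)$ and therefore $p_k(m)=p_k(a)\oplus p_k(b)$. More generally, a binary word split by a block of at least $k-1$ consecutive $\tO$'s has its $P_k$-count equal to the sum of the counts of the two pieces. This is the point at which the case $k\ge 2$ already departs from Thue--Morse: a single isolated $\tL$ contributes nothing to $s_k$, so flipping one high bit, which suffices for $s_1$, no longer toggles the parity.

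The buffers required by the lemma come from the monic structure of $P$. Choosing $\lambda$ with $2^{\lambda}\asymp N$ and $L=\lfloor c\,2^{\lambda/d}\rfloor\asymp N^{1/d}$, for indices $n=2^{\lambda}+r$ with $0\le r\le L$ one expands
\[ (2^{\lambda}+r)^{d}=\sum_{j=0}^{d}\binom{d}{j}\,2^{\lambda(d-j)}\,r^{j}, \]
and adds the lower-degree terms $c_i(2^{\lambda}+r)^i$ of $P$. Each summand occupies a band of width $O(\lambda/d)$ near bit position $\lambda(d-j)$, and for $r\le L$ these bands are separated by zero buffers of width $\gtrsim \lambda(d-1)/d\ge k$; in particular the leading bit $2^{\lambda d}$ is isolated and contributes nothing for $k\ge 2$. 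Iterating the block-additivity lemma across these buffers reduces $p_k(P(2^{\lambda}+r))$ to a fixed XOR of the band parities $p_k\!\big(\binom{d}{j}r^{j}\big)$, an expression depending only on $r$ and not on the anchor $2^{\lambda}$. This already yields long stretches of agreement between windows based at different anchors, which is the ``agree on the first $L-1$ terms'' half of the conflicting pair.

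The main obstacle is manufacturing the single disagreement at position $L$ without disturbing the agreement before it. Since at each interior position the reduced expression is anchor-independent, the flip must be produced by a controlled failure of the buffer structure exactly at $r=L$: a carry that either merges two adjacent bands into a run reaching length $k$, or pushes a band across a buffer so that a straddling occurrence of $\tL\cdots\tL$ is created (or destroyed), toggling $s_k\bmod 2$ once. I would engineer this by comparing two anchors whose band geometry differs just enough that this overflow occurs at $\ell=L$ for one index but not the other. The technical heart is therefore a carry analysis of the cross terms $\binom{d}{j}2^{\lambda(d-j)}r^{j}$ together with a pigeonhole/counting step guaranteeing that an admissible high block exists, all while (i) keeping the buffers intact for $0\le \ell\le L-1$, (ii) keeping the $k$-run bookkeeping exact across band boundaries, and (iii) keeping both indices inside $[0,N-L-1]$. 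Once the conflicting pair is in place, the distinguishing-pair criterion gives $M(\mathcal P_{k,P},N)>L\asymp N^{1/d}$, with the implied constant depending only on $d$ (hence on $P$) and on $k$.
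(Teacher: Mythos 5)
Your overall strategy --- a distinguishing pair of windows obtained from a non-interfering block decomposition of $P(n)$ at power-of-two anchors, plus one engineered interference that produces a single disagreement --- is exactly the strategy of the paper: there the two anchors are $2^{dl}$ and $2^{dl+s}$, the agreement/disagreement dichotomy is \Cref{MainLemmaPattern}, and your block-additivity lemma (splitting across a buffer of $\tO$'s so that no occurrence of $P_k$ straddles the cut) is the paper's identity~\eqref{trivial2}. Your reduction to a conflicting pair, your band decomposition of $(2^{\lambda}+r)^d$, and the observation that the agreement half follows from anchor-independence of the band parities are all sound and correspond to part (i) of the lemma.

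There is, however, a genuine gap at the decisive step, and it is precisely the step that makes $k\ge 2$ hard. You correctly note that flipping an isolated high bit no longer toggles $p_k$, and you propose to manufacture the disagreement by ``a carry that merges two adjacent bands into a run reaching length $k$, or pushes a band across a buffer,'' found via ``a pigeonhole/counting step.'' But a carry propagating through a binary expansion typically creates or destroys \emph{many} occurrences of $P_k$ at once (absorbing a run of $m$ consecutive $\tL$'s kills $m-k+1$ occurrences), and the entire difficulty is to control the \emph{parity} of that number; nothing in your outline pins it down, and no pigeonhole argument is exhibited that would. The paper resolves this with an explicit construction: the interference reduces to comparing $p_k(y^d+z)$ with $p_k(y^d+2^s z)$ for the fixed integer $z=\sum_{1\le i\le d} i\alpha_i$, and one takes $y=f_a(2^u)$ with $f_a(x)=ax^3+ax^2-x+a$, so that the unique negative coefficient of $f_a(x)^d$ forces $y^d$ to contain a long run of $\tL$'s whose length increases by exactly one when $u$ is incremented; aligning $2^s z$ against that run and then tuning the free parameter $u$ adjusts the parity of the number of cancelled $k$-patterns at will. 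Without this (or an equivalent) mechanism your argument establishes the agreement of the two windows but not the required single disagreement, so the lower bound does not follow.
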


It is possible to generalize our theorem for the case of integer-valued polynomials with rationals coefficients, we leave this rather straightforward extension to the interested reader (the proof runs along the same lines). We remark, however, that our construction crucially depends on the fact that the leading coefficient of the polynomial equals one. 

The paper is structured as follows. In Section 2, we generalize \Cref{TheoremSunWinterhof1} to any polynomial subsequence in place of the subsequence of squares and in Section 3 we establish the result for any pattern sequence. This answers a question posed by Sun and Winterhof (Problem 4 in~\cite{sunwinterhof2019bis}). We finish the paper with a list of open problems in Section 4.

\section{Thue--Morse sequence}

The Thue--Morse sequence along arithmetic progressions is $2$-automatic~\cite[Theorem 6.8.1]{alloucheshallit2003}. By~\cite[Theorem 6.10.1]{alloucheshallit2003}, the Thue--Morse sequence along polynomial subsequences is non-automatic if and only if the polynomial is at least of degree 2.  The problem raised by Sun and Winterhof~\cite{sunwinterhof2019bis} is to know whether there still holds a result such as \Cref{TheoremSunWinterhof1} for general polynomial subsequences.

The following trivial identity will be essential for our general proof. Let $a,b$ be positive integers and $0\leq b < 2^r$, then we have \begin{align}\label{trivial1}
s_1(a2^r+b)=s_1(a)+s_1(b).
\end{align}
If we have such $a$ and $b$ we say that the sum is \textit{non-interfering}. The proof of our main result is based both on non-interfering sums and on carry propagation.

\begin{lem}\label{MainLemmaThueMorse}
Let $d\geq2$ and $P(X) \in \mathbb{Z}[X]$ with $P(X)=X^d+\alpha_{d-1}X^{d-1}+\cdots+\alpha_1X+\alpha_0$ such that $P(\mathbb{N}) \subset \mathbb{N}$ and all $\alpha_i\geq 0$. Put $\alpha_{\max}=\max(\alpha_i)$. Then there exists a positive integer $l_0(P)$ such that for all $l>l_0(P)$ the following two properties hold:
\begin{enumerate}
\item[(i)] For all $1\leq n < \frac{1}{2(2\alpha_{\max})^{1/d}}\;2^l$ and for all $r\geq 1$, $$t(P(n+2^{dl}))=t(P(n+2^{dl+r})).$$ 
\item[(ii)] There are nonnegative integers $y$ and $r$ depending only on $P,$ such that \begin{align*}
t(P(1+y2^l+2^{dl}))\neq t(P(1+y2^l+2^{dl+r})).
\end{align*}
\end{enumerate}
\end{lem}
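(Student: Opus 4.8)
The plan is to exploit a Taylor (equivalently binomial) expansion of $P$ at $n$ to write, for any $m\ge1$,
\[
P(n+2^{m})=\sum_{k=0}^{d}c_k(n)\,2^{mk},\qquad c_k(n)=\frac{P^{(k)}(n)}{k!}=\sum_{i=k}^{d}\alpha_i\binom{i}{k}n^{\,i-k},
\]
so that $c_0(n)=P(n)$, $c_d(n)=1$, each $c_k$ has degree $d-k$ in $n$, and, since all $\alpha_i\ge0$ and $n\ge0$, all $c_k(n)\ge0$. The governing principle is the non-interfering identity \eqref{trivial1}: if $c_k(n)<2^{m}$ for every $k$, then the blocks $c_k(n)2^{mk}$ occupy disjoint ranges of binary digits and $s_1(P(n+2^{m}))=\sum_{k}s_1(c_k(n))$, a quantity that does \emph{not} depend on $m$. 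Both conclusions of the lemma will be read off from when this non-interference holds and when it fails.

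For (i) I would simply check that the hypothesis $1\le n<\tfrac{1}{2(2\alpha_{\max})^{1/d}}2^{l}$ forces $c_k(n)<2^{dl}$ for all $k$. The binding constraint is $k=0$, i.e. $P(n)<2^{dl}$; bounding $c_k(n)\le \alpha_{\max}2^{d+1}n^{d-k}\le\alpha_{\max}2^{d+1}n^{d}$ for $n\ge2$ and combining with $n^{d}<\tfrac{1}{2^{d+1}\alpha_{\max}}2^{dl}$ (which is exactly what the stated threshold gives) yields $c_k(n)<2^{dl}$, while the finitely many small values of $n$ contribute only $O_P(1)$ and are absorbed by choosing $l>l_0(P)$. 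Since $2^{dl}\le2^{dl+r}$, non-interference then holds for both $m=dl$ and $m=dl+r$, so $s_1(P(n+2^{dl}))=\sum_k s_1(c_k(n))=s_1(P(n+2^{dl+r}))$ and the two parities coincide.

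For (ii) the point is to leave this non-interfering regime. Taking $n_0=1+y2^{l}$ with $y\ge1$, the top coefficient becomes $c_0(n_0)=P(n_0)=y^{d}2^{dl}+R$ with $0\le R<2^{dl}$, so $c_0(n_0)\ge2^{dl}$ and block $0$ overflows into block $1$ with carry $\lfloor P(n_0)/2^{dl}\rfloor=y^{d}$. For $r$ large (depending only on $y$, hence on $P$) the base $2^{dl+r}$ separates all blocks, so $P(n_0+2^{dl+r})$ is still non-interfering and $s_1=\sum_k s_1(c_k(n_0))$; for the base $2^{dl}$, instead, the overflow $y^{d}$ is added to block $1$ (whose low-order bits are those of $c_1(n_0)=P'(n_0)\equiv P'(1)\pmod{2^{l}}$), and the digit sum decreases by exactly the number of carries produced, by the classical base-$2$ carry identity $s_1(a)+s_1(b)-s_1(a+b)=\#\{\text{carries}\}$. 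Hence
\[
t\bigl(P(n_0+2^{dl})\bigr)+t\bigl(P(n_0+2^{dl+r})\bigr)\equiv \#\{\text{carries in that addition}\}\pmod 2,
\]
and (ii) amounts to choosing $y$ (and then a suitable $r$ and $l_0(P)$) so that this number of carries is \emph{odd}.

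The main obstacle is precisely this last parity statement: producing an odd number of carries in the overflow addition. This is the step where ``carry propagation'' rather than mere non-interference does the work, and it is genuinely delicate, because the available summand $y^{d}$ is constrained to be a $d$th power while the bit pattern it must interact with is fixed by $P$. I expect the heart of the proof to be a careful choice of $y$ driven by the low-order binary expansion of the relevant coefficient of $P$, with the threshold $l_0(P)$ and the shift $r$ chosen only afterwards, once $y$ is fixed, so that all the non-interference estimates above remain valid.
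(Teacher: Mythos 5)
Your treatment of part (i) is essentially the paper's argument: the same coefficients $c_k(n)=\sum_{k\le j\le d}\binom{j}{k}\alpha_j n^{j-k}$, the same bound $c_k(n)\le\alpha_{\max}2^{d+1}n^d$, and the same non-interference conclusion via \eqref{trivial1}. That part is fine.

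For part (ii) there is a genuine gap, and it is not only the step you flag as ``delicate'' --- the reduction you set up is itself too weak. By taking $r$ large enough that $P(n_0+2^{dl+r})$ becomes \emph{fully} non-interfering, you discard the second degree of freedom the proof needs: your task becomes ``find $y$ such that the number of carries in the addition $y^d+z$ is odd,'' where $z=P'(1)=\sum_{1\le i\le d}i\alpha_i$. This is impossible for some admissible $P$. Take $P(X)=X^2+X$, so $d=2$ and $z=3$: if $y$ is even then $y^2\equiv0\pmod 4$ and adding $3=(\tL\tL)_2$ produces $0$ carries; if $y$ is odd then $y^2\equiv1\pmod 8$ and adding $3$ produces exactly $2$ carries. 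The carry count is always even, so $t(y^2+3)=t(y^2)+t(3)$ for every $y$ and your target parity can never be $1$. The paper's proof keeps $r$ finite and active: for such $r$ the second value also has an interfering cluster, namely $y^d+2^rz$, and the quantity to control is $t(y^d+z)+t(y^d+2^rz)$ rather than $t(y^d+z)+t(y^d)+t(z)$. Choosing $y=2^\lambda$ with $2^\lambda\le z<2^{\lambda+1}$ makes $y^d+z=2^{\lambda d}+z$ carry-free, hence $t(y^d+z)\equiv1+t(z)$, while $r=\lambda(d-1)$ aligns the most significant bit of $2^rz$ with the single bit of $y^d=2^{\lambda d}$, producing exactly one carry and $t(y^d+2^rz)=t(2^\lambda+z)=t(z)$; the sum is then $1$. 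The missing idea is thus to use the shift $r$ to relocate $z$ against $y^d$, not merely to separate the blocks.
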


\begin{proof} Set $\alpha_d=1$. For the first part we write
\begin{align} 
P(n+2^{dl})&=\sum \limits_{0\leq j \leq d}\alpha_j(n+2^{dl})^j, \nonumber\\
&=\sum \limits_{0\leq i\leq d} \left( \sum \limits_{i\leq j \leq d} \binom{j}{i}\alpha_jn^{j-i} \right)2^{idl}.\label{noninterferingsum}
\end{align}
Set $\beta_i=\sum \limits_{i\leq j \leq d} \binom{j}{i}\alpha_jn^{j-i}$. We note that for all $0 \leq i \leq d$, 
\begin{align*}
\beta_i \leq \alpha_{\max}\, n^{d-i}\sum \limits_{i\leq j\leq d} \binom{j}{i}\leq \alpha_{\max}\,n^d\binom{d+1}{i+1}\leq \alpha_{\max}\,n^d2^{d+1}.
\end{align*}
There exists a positive integer $l_0(P)$ such that for all $l>l_0(P)$ and all integers $n$ with
\begin{align}\label{condition}
1\leq n < \frac{1}{2(2\alpha_{\max})^{1/d}}\,2^l,
\end{align}
we have $\beta_i <2^{dl}$.
Thus, for $l>l_0(P)$ and all $n$ with~\eqref{condition} the sum~\eqref{noninterferingsum} is non-interfering, thus we get for all $r\geq 1$, 
$$t(P(n+2^{dl}))=\sum \limits_{0\leq i\leq d}t(\beta_i)=t(P(n+2^{dl+r})),$$
which shows property (i).
As for the second part, we write
\begin{align}\label{interferingsum}
P(1+y2^l+2^{dl})&=\sum \limits_{0\leq i\leq d} \left( \sum \limits_{i\leq j \leq d} \binom{j}{i}\alpha_j(1+y2^l)^{j-i} \right)2^{idl}.
\end{align}
We regroup terms by powers of $2$ and check for possible interferences. The general term is $ 2^{idl+l(j-i)}$, for $ 0\leq i \leq  j \leq d$, whereas the coefficients depend only on $P$ and $y$. We represent the general terms in the following table: 

$$\begin{tabular}{|c|c|c|c|c|c|c|}
\hline
\backslashbox{$i$}{$j$} & $0$ & $1$ & $2$ & $\cdots$ & $d$ \\ \hline
$0$ & $2^0$ & $2^{l}$ & $2^{2l}$ &  & $2^{dl}$ \\ \hline
$1$ &  & $2^{dl}$ & $2^{dl+l}$ &  & $2^{dl+(d-1)l}$ \\ \hline
$2$ &  &  & $2^{2dl}$ &  & $2^{2dl+(d-2)l}$ \\ \hline
$\vdots$ & & &  & $\ddots$ & \\ \hline
$d$ &  &  &  &  & $2^{d\cdot dl}$\\ \hline
\end{tabular}$$

\bigskip

The only possible interference, for $l>l_1(P)$, is between $(i,j)=(0,d)$ and $(i,j)=(1,1)$ that both correspond to the general term $2^{dl}$. Each coefficient in front of $2^{idl+(j-i)l}$ does not depend on $l$ since $y$ will be chosen later to depend only on $P$ and the gap between any two distinct general terms in~\eqref{interferingsum} is at least $2^l$. The interfering term is the term in front of $2^{dl}$ in  the expansion of 
\begin{align*}
&(\alpha_0+\alpha_1(1+y2^l)+\cdots+(1+y2^l)^d)\, 2^{0\cdot dl}\\&+(\alpha_1+2\alpha_2(1+y2^l)+\cdots+d(1+y2^l)^{d-1})\,2^{1\cdot dl},
\end{align*}
since all the other general terms are at least of size $2^{2dl}$. Therefore, the interfering term is $y^d+\sum \limits_{1\leq i \leq d}i\alpha_i$.

On the other hand, by a similar calculation, we have \begin{align*}
P(1+y2^l+2^{dl+r})&=\sum \limits_{0\leq i\leq d} \left( \sum \limits_{i\leq j \leq d} \binom{j}{i}\alpha_j(1+y2^l)^{j-i} \right)2^{i(dl+r)}.
\end{align*}
We regroup terms by powers of $2$ as before. The general term is $2^{i(dl+r)+(j-i)l}$ and we have the following table:

$$\begin{tabular}{|c|c|c|c|c|c|c|}
\hline
\backslashbox{$i$}{$j$} & $0$ & $1$ & $2$ & $\cdots$ & $d$ \\ \hline
$0$ & $2^0$ & $2^{l}$ & $2^{2l}$ &  & $2^{dl}$ \\ \hline
$1$ &  & $2^{dl+r}$ & $2^{dl+l}2^r$ &  & $2^{dl+(d-1)l}2^r$ \\ \hline
$2$ &  &  & $2^{2dl}2^{2r}$ &  & $2^{2dl+(d-2)l}2^{2r}$ \\ \hline
$\vdots$ & & &  & $\ddots$ & \\ \hline
$d$ &  &  &  &  & $2^{d\cdot dl}2^{dr}$\\ \hline
\end{tabular}$$

\bigskip

Once again the only interference possible is for $(i,j)=(0,d)$ and $(i,j)=(1,1)$ for $l>l_2(P)$. Here, the interfering term is $y^d+2^r\sum \limits_{1 \leq i \leq d}i\alpha_i$. All the coefficients in the second table are identical to the ones given in the first table up to a multiplicative factor (a power of $2$). Since $t(2^{\mu}n)=t(n)$ for all $\mu \geq 0$, the contributions coming from the non-interfering terms are the same as in the former case. Put $z=\sum \limits_{1\leq i \leq d}i\alpha_i > 0$; we note that $z$ is a positive integer that only depends on $P$ and that $z\geq 2$. Summing up, for $l>l_3(P)$, we have 
\begin{align}\label{Goal} 
  t(P(1+y2^l+2^{dl}))&+t(P(1+y2^l+2^{dl+r}))\nonumber\\ 
  &\equiv t(y^d+z)+t(y^d+2^rz) \pmod 2.
\end{align}
Our final aim is to guarantee the existence of $r$ and $y$, only depending on $P$, such that right hand side of~\eqref{Goal} equals $1 \pmod 2$. Let $\lambda\geq 1$ be the unique integer with $2^{\lambda}\leq z< 2^{\lambda+1}$. Thus the most significant bit of $z$ is at position $2^{\lambda}$. Let $y=2^{\lambda}$, thus $z<2^{\lambda d}$, then we have \begin{align*}
t(y^d+z)= t(2^{\lambda d}+z)\equiv 1+t(z)\pmod 2.
\end{align*}
Let $r=\lambda d-\lambda\geq 1$, we have \begin{align*}
t(y^d+2^rz)= t(2^{\lambda d }+2^{\lambda d -\lambda}z)=t(2^{\lambda}+z)=t(z)
\end{align*}
since the most significant bit of $z$ is at position $2^{\lambda}$. We therefore conclude \begin{align*}
t(P(1+y2^l+2^{dl}))+t(P(1+y2^l+2^{dl+r}))\equiv 1+2\,t(z) \equiv 1 \pmod 2
\end{align*}
and we get property (ii).
\end{proof}

We have now all we need to prove \Cref{MainTheorem1}. 
\begin{proof}[Proof of \Cref{MainTheorem1}]

We first note that we can suppose $\alpha_i \geq 0$ for all $0\leq i<d-1$ and $\alpha_d=1$. Indeed, for positive integers $n,a$ we have $P(n+a)=\sum_{0 \leq i \leq d } \beta_i n^i$ with $\beta_i=\sum_{i\leq j \leq  d} \binom{j}{i} \alpha_j a^{j-i}$ and $\alpha_d=1$, such that for sufficiently large $a$, \begin{align*} \beta_i&=a^{d-i}\left( \binom{d}{i} \alpha_d+\sum \limits_{i\leq j < d}\binom{j}{i}\alpha_j a^{j-d} \right) \gg_{P} a^{d-i},
\end{align*}
and therefore we have $\beta_i \geq 0$ for all $i \geq 0$.
This translation by the positive integer $a$ that only depends on $P$ does not affect the measures of complexity that we study since we will suppose $N$ sufficiently large. Hence, without loss of generality, we can assume that all the coefficients of $P$ are positive integers.

Let $\alpha_{\max},z,\lambda,y,r$ be such as in the proof of the second part of \Cref{MainLemmaThueMorse}. Note that all these quantities only depend on $P$ and not on $l$. Let $N > N_0(P)$ be large enough and $M(\mathcal{T}_P,N)=M$. Let $l \geq 2$ be the integer defined by \begin{align*}
1+y2^l+2^{dl+r}<N \leq 1+y2^{l+1}+2^{d(l+1)+r}.
\end{align*} 
We follow the argument in the proof of Sun and Winterhof~\cite[Theorem 1]{sunwinterhof2019bis}. Assume that \begin{align*}
M< \frac{1}{2(2\alpha_{\max})^{1/d}}\,2^l,
\end{align*}
that is, there is a polynomial $f(x_1,\ldots,x_M)$ in $M$ variables with \begin{align}\label{cpx}
t(P(j+M))=f(t(P(j)),\ldots,t(P(j+M-1))),\quad j=0,1\ldots,N-M-1.
\end{align}
Note that for $0 \leq k \leq N-M-1$ the values of $t(P(k+M)),\ldots,t(P(N-1))$ are uniquely determined by the values of $t(P(k)),\ldots,t(P(k+M-1))$ by applying~\eqref{cpx} successively for $j=k,\ldots,N-M-1$. In particular, if \begin{align}\label{k1k2}
(t(P(k_1)),\ldots,t(P(k_1+M-1)))=(t(P(k_2)),\ldots,t(P(k_2+M-1)))
\end{align}
for some $k_1$ and $k_2$ with $0\leq k_1<k_2\leq N-M-1$, we get also \begin{align*}
(t(P(k_1+M)),\ldots,t(P(k_1+&N-k_2-1)))=\\&(t(P(k_2+M)),\ldots,t(P(N-1))).
\end{align*}
Take $k_1=2^{ld}$ and $k_2=2^{ld+r}$. By the first part of \Cref{MainLemmaThueMorse}, $(k_1,k_2)$ satisfies~\eqref{k1k2}. Then we have \begin{align*}
(t(P(2^{ld}+M)),\ldots,t(P(N+2^{ld}(1-2^{r})-1)))=\\(t(P(2^{ld+r}+M)),\ldots,t(P(N-1))).
\end{align*}
Since $N-1 \geq 1+y2^l+2^{dl+r}$ and $M\leq 1+y2^l$, this includes \begin{align*}
t(P(1+y2^l+2^{dl}))=t(P(1+y2^l+2^{dl+r})),
\end{align*}
which contradicts the second part of \Cref{MainLemmaThueMorse} and we get \begin{align}\label{eqmaxorder}
M \geq \frac{1}{2(2\alpha_{\max})^{1/d}}\,2^l \gg_P N^{1/d}. 
\end{align}
This finishes the proof of \Cref{MainTheorem1}.
\end{proof}

\section{Pattern sequences}

The identity~\eqref{trivial1} is not true in general for $s_k$ instead of $s_1$. For example, we have $s_2(4+2)=s_2(\tL \tL \tO)=1$ and $s_2(4)+s_2(2)=s_2(\tL \tO \tO)+s_2(\tL \tO)=0$. However, a very similar identity holds true when we add a $\tO$-bit in~\eqref{trivial1} between the expansions of $a2^{r}$ and $b$ : Let $a,b$ be positive integers and $0\leq b < 2^r$, then we have for all $k\geq 2$,\begin{align}\label{trivial2}
s_k(a2^{r+1}+b)=s_k(a)+s_k(b).
\end{align}

\begin{lem}\label{MainLemmaPattern}

Let $d\geq2$ and $P(X) \in \mathbb{Z}[X]$ with $P(X)=X^d+\alpha_{d-1}X^{d-1}+\cdots+\alpha_1X+\alpha_0$ such that $P(\mathbb{N}) \subset \mathbb{N}$ and all $\alpha_i\geq 0$. Put $\alpha_{\max}=\max(\alpha_i)$. Then there exists a positive integer $l_0(P,k)$ such that for all $l>l_0(P,k)$ the following two properties hold:

\begin{enumerate}
\item[(i)] For all $1\leq n < \frac{1}{4(2\alpha_{\max})^{1/d}}\;2^l$ and for all $s\geq 1$, $$p_k(P(n+2^{dl}))=p_k(P(n+2^{dl+s})).$$ 
\item[(ii)] There are nonnegative integers $y=y(P,k)$ and $s=s(P,k)$ such that \begin{align*}
p_k(P(1+y2^l+2^{dl}))\neq p_k(P(1+y2^l+2^{dl+s})).
\end{align*}
\end{enumerate}

\end{lem}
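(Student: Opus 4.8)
The plan is to follow the architecture of the proof of \Cref{MainLemmaThueMorse}, replacing the additivity \eqref{trivial1} of $s_1$ by the additivity \eqref{trivial2} of $s_k$, and paying for it with one extra $\tO$-bit between every pair of non-interfering blocks. For part (i) I would expand $P(n+2^{dl})=\sum_{0\le i\le d}\beta_i 2^{idl}$ with $\beta_i=\sum_{i\le j\le d}\binom{j}{i}\alpha_j n^{j-i}$ exactly as in \eqref{noninterferingsum}, and keep the same estimate $\beta_i\le\alpha_{\max}n^d2^{d+1}$. The only genuine change is that the sharper range $n<\frac{1}{4(2\alpha_{\max})^{1/d}}2^l$ (factor $4$ in place of $2$) forces $\beta_i<2^{dl-1}$ once $l>l_0(P,k)$, so each block $\beta_i2^{idl}$ now leaves a spare $\tO$-bit before the next block begins. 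Peeling the blocks off from the bottom and applying \eqref{trivial2} at each step, with this spare zero playing the role of the inserted bit, gives $s_k(P(n+2^{dl}))=\sum_i s_k(\beta_i)$; the identical computation for $2^{dl+s}$ produces the same sum because the $\beta_i$ do not depend on the exponent, whence $p_k(P(n+2^{dl}))=p_k(P(n+2^{dl+s}))$.

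For part (ii) I would reuse the two tables from \Cref{MainLemmaThueMorse}: writing $P(1+y2^l+2^{dl})=\sum_i\beta_i2^{idl}$ and $P(1+y2^l+2^{dl+s})=\sum_i\beta_i2^{i(dl+s)}$ with $\beta_i=\sum_{j\ge i}\binom{j}{i}\alpha_j(1+y2^l)^{j-i}$, for $l$ large every block is $\tO$-separated from the others except for the single collision of $(i,j)=(0,d)$ with the $(1,\cdot)$ terms at $2^{dl}$. Using the shift-invariance $s_k(2^\mu n)=s_k(n)$ (a run of $\tL$'s is unaffected by trailing zeros) together with \eqref{trivial2} to split off all separated blocks, the two non-interfering parts coincide and the entire difference localises at the collision. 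There the top block of $\beta_0$ contributes $y^d$ and the bottom block of $\beta_1$ contributes $z=\sum_{1\le i\le d}i\alpha_i\ge 2$, so the collision block equals $y^d+z$ in the unshifted case and $y^d+2^sz$ in the shifted case, giving
\[
p_k(P(1+y2^l+2^{dl}))+p_k(P(1+y2^l+2^{dl+s}))\equiv s_k(y^d+z)+s_k(y^d+2^sz)\pmod 2 .
\]
It remains to choose $y$ and $s$, depending only on $P$ and $k$, making this right-hand side odd.

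This last choice is the heart of the matter and the \textbf{main obstacle}, and it is where the pattern case genuinely departs from \Cref{MainLemmaThueMorse}. For $k=1$ the Thue--Morse argument works because adding a single isolated bit always flips $s_1$; for $k\ge 2$ an isolated new bit leaves $s_k$ unchanged, so a one-bit probe fails and the small values $y=2^\lambda$, $s=\lambda(d-1)$ no longer suffice. The quantity $s_k(y^d+z)+s_k(y^d+2^sz)\bmod 2$ counts, modulo $2$, the runs of $\tL$'s of length $\ge k$ that are created or destroyed as $z$ is slid from offset $0$ to offset $s$ against $y^d$, and to force it to equal $1$ I would arrange that exactly one run changes length by one. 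For $k=2$ this is immediate: since $z\ge 2$ its most significant bit is already a run of length $1$, and sliding $z$ until that bit abuts a bit of $y^d$ creates a single new $\tL\tL$. For general $k$ I would first manufacture a run of $\tL$'s of length $\ge k-1$ inside $y^d$ and then use $s$ to lengthen it by one. When $d$ is odd, $x\mapsto x^d$ is a bijection on the odd residues modulo any $2^m$, so I can prescribe the low bits of $y^d$ to be any value $c$ with $s_k(c+z)\not\equiv s_k(c)+s_k(z)$ (for instance $c$ a block of $k-1$ ones placed so that $z$'s top bit completes a length-$k$ run) and take $s$ large; when $d$ is even, $y^d$ is a square and its low bits are constrained, but $y=2^\mu-1$ yields long runs of $\tL$'s high in $(2^\mu-1)^d$ through borrow cascades, which the shift can then exploit.

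The delicate technical point throughout is that $z$ is a fixed multi-bit integer determined by $P$, so its own bits must be kept from interfering with the engineered run except at the single boundary where the parity is meant to flip; one must also recheck that all the $\tO$-bit separations needed for \eqref{trivial2} survive near that boundary, uniformly in $k$ and in the coefficients of $P$. Controlling these carries — rather than the reduction to the collision block, which is essentially formal — is where I expect the real work to lie.
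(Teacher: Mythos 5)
Your part (i) and the reduction of part (ii) to the congruence $p_k(P(1+y2^l+2^{dl}))+p_k(P(1+y2^l+2^{dl+s}))\equiv p_k(y^d+z)+p_k(y^d+2^sz)\pmod 2$ match the paper, and you correctly identify the choice of $y$ and $s$ as the crux. But that crux is exactly where your proposal has a genuine gap: none of the constructions you sketch actually forces the right-hand side to be odd for general $k$ and $d$. Your ``$d$ odd'' branch (prescribe the low bits of $y^d$ to be a block of $k-1$ ones completing a length-$k$ run with the top of $z$) fails on concrete inputs: if $z=\tL^{(i)}\tO\omega'$ and you abut a run of $j\geq k-1$ ones against it with no carries, the change in the $k$-pattern count is $\min(i,k-1)$ \emph{independently of} $j$, which is even for instance when $k=3$ and $z=\tL\tL\tO$. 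So there is no free parameter left to toggle the parity, and the same objection applies to your $k=2$ heuristic once one must also verify that no other run of $y^d$ or of $z$ is disturbed. Your ``$d$ even'' branch ($y=2^\mu-1$, borrow cascades) is closer in spirit to what is needed, but $(2^\mu-1)^d$ has about $d/2$ negative binomial coefficients and hence several interacting $\tL$-runs, and you do not say how the shift ``exploits'' them to change the parity by exactly one.

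The missing idea is a \emph{single, tunable} $\tL$-run inside $y^d$. The paper takes $y=f_a(2^u)$ with $f_a(x)=ax^3+ax^2-x+a$ and $a=2^\lambda$ large: for $a>a_0(d)$ every coefficient of $f_a(x)^d$ is positive except the coefficient of $x$, which equals $-da^{d-1}$, so the binary expansion of $y^d$ consists of fixed blocks separated by $\tO$'s plus exactly one block $\tL^{(\alpha)}$ produced by the single borrow, and --- crucially --- $\alpha$ increases by exactly one when $u$ is replaced by $u+1$. Choosing $s$ so that the leading run $\tL^{(i)}$ of $z$ aligns with the bottom of $\tL^{(\alpha)}$, the carry collapses $\tL^{(\alpha)}$ to $\tL^{(i-1)}$ followed by $\tO$'s; the number of $k$-patterns destroyed is $(\alpha-k+1)-\max(i-k,0)$, which changes parity between $u$ and $u+1$, so one of two consecutive values of $u$ gives $p_k(y^d+2^sz)\equiv p_k(y^d)+p_k(z)+1\pmod 2$ while $y^d+z$ remains non-interfering. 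This parity-toggling parameter $u$ is what replaces the one-bit flip of the Thue--Morse case, and it is the ingredient your proposal would need to supply before the carry bookkeeping you defer to ``the real work'' can be carried out.
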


\begin{proof}
We can directly proceed as in the proof of \Cref{MainLemmaThueMorse} with~\eqref{trivial1} replaced by~\eqref{trivial2}. The ``digital gap'' in \eqref{trivial2} translates into a change by a factor $2$. We use the same notation as in \Cref{MainLemmaThueMorse}. For second part we get that for $l>l_0(P)$,
\begin{align*} p_k(P(1+y2^l+2^{dl}))&+p_k(P(1+y2^l+2^{dl+s}))\\ &\equiv p_k(y^d+z)+p_k(y^d+2^sz) \pmod 2.
\end{align*}
Let $f_a(x)=ax^3+ax^2-x+a$ be a polynomial where $a$ is a suitable positive integer that we will chose later. We write $$f_a(x)^d=\sum \limits_{0\leq i\leq 3d}\mu_ix^i$$ with $\mu_i \in \mathbb{Z}$. For $a>a_0(d)$, we have $\mu_i >0$ for $i \neq 1$ and $\mu_1=-da^{d-1}<0$. It follows that for $u>u_0(d)$ we have 
\begin{align} \label{f_a}
(f_a(2^u)^d)_2=\eta_1 \tO \ldots \tO \eta_2 \tO \ldots \tO \eta_{t-1} \tL \ldots \tL \eta_t \tO \ldots \tO \eta_{t+1}
\end{align} 
with $t=3d$ and some $\eta_i=\eta_i(a,d)$. For $i \in \{1,\ldots,t-2,t+1\}$, we can choose $\eta_i$ the binary expansion of $\mu_i$, $\eta_{t-1}$ such that its first bit is $\tL$ and its last bit is $\tO$ and $\eta_{t}$ such that its first bit is $\tO$ and its last bit is $\tL$. Note that the decomposition in $\eqref{f_a}$ is not unique in general. For $a=2^{\lambda}$ with $\lambda>\lambda_0(z)$ we have $p_k((f_a(2^{u})^d+z)=p_k((f_a(2^{u})^d)+p_k(z)$. Furthermore, the length of the $\tL$-block depends on $u$ and the transition from $u$ to $(u+1)$ adds exactly one $\tL$-bit to that $\tL$-block. Let $y=f_a(2^u)$ for $a=2^{\lambda} > \max(a_0,2^{\lambda_0})$ and $u>u_0(a,d)$. We write in the following $y^d=\omega_1\tO\ldots\tO\omega_2\tO\tL^{(\alpha)}\tO\omega_3$ where $\tL^{(\alpha)}$ is the block of $\tL$-bits in $\eqref{f_a}$ between $\eta_{t-1}$ and $\eta_t$. For $u>u_1(a,d,z,k)$ sufficiently large, we can ensure that $\alpha > \max(\lceil \log_2(z) \rceil , k)$. We write $z=\tL^{(i)}\tO\omega'$ or $z=\tL^{(i)}$ for some $i\geq 1$ and digital block $\omega'$ which can be possibly empty. We choose $s$ in a way that the first $i$ $\tL$-bits interfere with the last $i$ $\tL$-bits of the inner $\tL$-bits block of $y^d$. Note that this is always possible since $\alpha$ is larger than the length of $z$. When $z=\tL^{(i)}\tO\omega'$ we get 

\bigskip

\begin{tabular}{ccccccll}
& $\omega_1\tO\ldots\tO$&$ w_2$&$\overbrace{\tL \ldots \tL}^{\alpha-i}$ && $\overbrace{\tL\ldots\tL\tL}^{i}$ &$\tO\omega_3$&$=y^d$\\
+ & & & & & $\tL\ldots\tL\tL$ & $\tO\omega'$ & $=2^sz$\\
\hline
& $\omega_1\tO\ldots\tO$&$ (w_2+1)$& $\tO\ldots\tO$ && $\tL\ldots\tL\tO$ &$\tO(\omega_3+\omega')$& $=y^d+2^sz$
\end{tabular}  

\bigskip

When $z=\tL^{(i)}$ we get

\begin{tabular}{ccccccll}
& $\omega_1\tO\ldots\tO$&$ w_2$&$\overbrace{\tL \ldots \tL}^{\alpha-i}$ && $\overbrace{\tL\ldots\tL\tL}^{i}$ &$\tO\omega_3$&$=y^d$\\
+ & & & & & $\tL\ldots\tL\tL$ & $\tO \cdots \tO$ & $=2^sz$\\
\hline
& $\omega_1\tO\ldots\tO$&$ (w_2+1)$&$\tO\ldots\tO$ &  & $\tL\ldots\tL\tO$ &$\tO\omega_3$& $=y^d+2^sz$
\end{tabular}

\bigskip

In both cases, for $u>u_2(a,d,z,k)$ sufficiently large, $p_k(y^d+2^sz)$ is constant with respect to $u$ since no more $k$-pattern is created or canceled. In fact, only the length of the $\tO$-blocks and the one of the $\tL$-block change with $u$ in $\eqref{f_a}$ but the sum $y^d+2^sz$ reduces the $\tL$-block of length $\alpha$ to a $\tL$-block of length $(i-1)$ which does not depend on $u$ anymore. Furthermore, the number of $k$-patterns in $\omega_1,\omega_2,\omega_3$ and $\omega'$ are independent from $u$. Hence $p_k(y^d+2^sz)$ is constant for all $u>u_2(a,d,z,k)$. What matters now is the number of $k$-patterns we cancel in $y^d$ by adding $2^sz$. Since $\alpha>k$, the transition from $u$ to $(u+1)$ implies that if we add $2^sz$ to $y^d$, we cancel one more $k$-pattern in $y^d$. Thus we can choose $u>u_2(a,d,z,k)$ in such a way that the parity of the number of $k$-patterns that are canceled in $y^d$ by adding $2^sz$ changes. This leads to a solution of $p_k(y^d+2^sz)\equiv p_k(y^d)+p_k(z)+1\pmod 2$ and the lemma is proved.

\end{proof}

\begin{proof}[Proof of \Cref{MainTheorem2}] 
The proof follows the lines of the proof of \Cref{MainTheorem1} where we replace \Cref{MainLemmaThueMorse} by \Cref{MainLemmaPattern}. We note that the size of $y$ only occurs in the final step of the proof, namely, inequality~\eqref{eqmaxorder}. Thus we have 
\begin{align*}
M \geq \frac{1}{4(2\alpha_{\max})^{1/d}}\,2^l \gg_{P,k} N^{1/d}, 
\end{align*}
which proves \Cref{MainTheorem2}.
\end{proof}

\section{Open problems}

We can define a larger family of automatic sequences on $\{0,\ldots,q-1\}$ for $q\geq 2$ by the following procedure. Let $\omega \in \{0,\ldots,q-1\}^k \setminus\{0\ldots0\}$ be a pattern of length $k$ and $e_{\omega}(n)$ be the number of occurrences of $\omega$ in the $q$-ary representation of $n$. We define the sequence $(\rho(n))_n$ by
\begin{align}\label{generalpattern}
  \rho(n)\equiv e_{\omega}(n) \pmod m,
\end{align}
where $m\geq 2$ is a fixed integer. From our reasoning in the proofs, one can extend our result for $\omega=(q-1)\ldots(q-1)$ and any $m\geq 2$. Indeed in the proof of \Cref{MainLemmaPattern}, we use a $\tL$-bit carry propagation along a large block of $\tL$-bits. The same argument works to cancel bits of digits $(q-1)$ in base-$q$ expansions. 

\begin{pb}
Extend \Cref{MainTheorem2} to general pattern sequences as defined in \eqref{generalpattern}. It is essential to our proof that the pattern must appear at least once in $y^d$. For prime $m$ and $d$ such $m\nmid d$, Hensel's lifting lemma can be useful to make sure that it appears. It seems that other ideas are needed in the case $m \mid d$. 
\end{pb}

\begin{pb}[\cite{drmotamauduitrivat2019}, Conjecture 1]
Show that the subsequences of the Thue--Morse sequence along any polynomial of degree $d\geq 3$ are normal. A lower bound on the subword complexity was established by Moshe~\cite[Corollary 3]{moshe2007} and~\cite{drmotamauduitrivat2011} gives a partial answer for generalized Thue--Morse sequence with large $q$.
\end{pb}

\begin{pb}[\cite{sunwinterhof2019bis}]
Prove lower bounds on the expansion complexity of the Thue--Morse sequence along polynomials of degree $d\geq 2$. This question seems to be hard even for small degree polynomials.
\end{pb}

\noindent {\bf Acknowledgements.} This work was supported partly by the french PIA project `` Lorraine Universit\'{e} d'Excellence '', reference ANR-15-IDEX-04-LUE. The author expresses his gratitude to D. Jamet and T. Stoll for the supervision of this work, and A. Winterhof for the right path to the proof.

\bibliographystyle{amsplain}
\bibliography{biblio}

\end{document}